 \newtheorem{theorem}{\bf Theorem}
\newcommand{\Title}[1]{\title{\bf#1}}
 \newcommand {\Author}[1]{\author{\small\bf #1 \vspace{1mm} \addr \emails}}
 \newcommand{\Address}[1]{\def\addr{\\ \vspace{2mm}{\small\textit{ \begin{varwidth}{.9\textwidth} \centering #1\end{varwidth} }}\\}
 }
 \newcommand{\Markboth}[2]{\pagestyle{myheadings}
 	\markboth{#1}{ #2}}
 \newcommand{\Email}[1]{\newcommand{\emails}{ \begin{varwidth}{.9\textwidth} \centering{\small \textit{Email(s): #1}} \end{varwidth}
 \vspace{-1cm}
}}
 \newcommand{\correspond}{\thanks{\noindent 
 		Corresponding author. \newline
 		\scriptsize{Received:  / Revised:  / Accepted: } \newline
 		DOI:  \newline\newline
 		{\small\bf \copyright~2021 University of Guilan \hfill \bf \url{http://jmm.guilan.ac.ir}}}~~}
 	\newcommand{\Abstract}[1]{
 		\date{}
 		\maketitle 
 		\noindent\rule{6.29in}{0.02in}\\
 		{\bf Abstract.\,}{#1 \\}}
 	\newcommand{\Keywords}[1]{\noindent{\it \footnotesize Keywords}: {\small #1}\\  }
 	\newcommand{\AMS}[2]{\noindent
 		\noindent{\it \footnotesize AMS Subject Classification #1}:~#2\\
 		\noindent\rule{6.29in}{0.02in}
 		\thispagestyle{empty}}
 \newcommand{\Vol}[1]{\newcommand{\VOLUME}{#1}}
 \newcommand{\Num}[1]{\newcommand{\NUMBER}{#1}}
 \newcommand{\Year}[1]{\newcommand{\YEAR}{#1}}
 \newcommand{\PP}[1]{\newcommand{\PAGE}{~#1}}
 \newcommand{\spacelogo}[1]{\newcommand{\SpaceLogo}{#1}}
\begin{document}
\Title{ Optimal fire allocation in a combat model of mixed NCW type}

\Author{My A. Vu$^\dag$\correspond,  Nam H. Nguyen$^{\ddag}$, Hanh Le T. Nguyen$^{\S}$, Anh N. Ta$^\heartsuit$, Mong H. Nguyen$^\spadesuit$} 
\Address{$^{\dag}$,$^{\ddag}$,$^\heartsuit$,$^\spadesuit$ Department of Mathematics, Faculty of Information Technology, Le Quy Don Technical University, Ha Noi, Vietnam\\
         $^{\S}$Faculty of Fundamental Sciences, University of Economics and Industrial Technology, Ha Noi, Vietnam}

\Email{myva@lqdtu.edu.vn, nguyenhongnam1977@gmail.com, nthle@uneti.edu.vn, tangocanh@gmail.com, nghm06@yahoo.com}
\Markboth{My VA, Nam NH, Hanh Le NT, Anh TN, Mong NH}{Optimal fire allocation in mixed NCW model}

\Abstract{In this work, we introduce a nonlinear Lanchester model of NCW-type and study a problem of finding the optimal fire allocation for this model. A Blue party $B$ will fight against a Red party consisting of $A$ and $R$, where $A$ is an independent force and $R$ fights with supports from a supply unit $N$. A battle may consist of several stages but we consider the problem of finding optimal fire allocation for $B$ in the first stage only. Optimal fire allocation is a set of three non-negative numbers whose sum equals to one, such that the remaining force of $B$ is maximal at any instants. In order to tackle this problem, we introduce the notion of \textit{threatening rates} which are computed for $A, R, N$ at the beginning of the battle. Numerical illustrations are presented to justify the theoretical findings.
}

\Keywords{Nonlinear Lanchester Model, Network Centric Warfare, Optimal fire allocation.}
\AMS{2010}{34A34, 65L05.}
 \section{Introduction}
\label{S:1}
	In 1916, Lanchester \cite{Lan} introduced a mathematical model for
a battle in the form of a system of differential equations. 
This model has been extended and generalized in various ways, such as guerilla model by Deitchman \cite{Dei}, guerilla model with intelligence by Schaffer \cite{Sch} and  Schreiber
\cite{Sch1}, counter terrorism model by Kaplan, Kress and Szechtman (KKS) \cite{Kap},\cite{Kre}. There are several problems involving these models, among of which are the problem of optimal fire allocation with number of troops being objective function. This problem has been investigated in various scenarios by Taylor \cite{Tay}, Lin and Mackay \cite{Mac}. In these works, the role of military supply, however, has not been studied thoroughly. In a combat, the victory of either party is not only decided by the armed forces but also by their supply units. In many historical battles, firepower was not only aimed at the direct rivals but also their supply units, see \cite{Welborn}. \\
In 2017, Kim and his colleagues \cite{Kim2017} considered a Lanchester's model where Blue force $B$ fights against Red force $R$ supported by a supply unit $N$ and called this a NCW model. This model can be denoted by $\left( B \textit{ vs } (R,N)\right)$. Kim considered fire allocations in the form of piecewise constant functions and derived optimal fire allocation so that number of $B$'s troop is always at its possible maximum. In this paper, we extend Kim's model by considering a model of $\left( B \textit{ vs } \{(R,N), A\}\right)$ where $A$ stands for an independent force. Let us refer this model as mixed NCW. For this model, we also consider the problem of finding optimal fire allocation of $B$ so that its remaining troop at any instant is maximal. By a different approach with the help of \textit{"threatening rates"}, we managed to derive the optimal fire allocation for $B$. 
In Lanchester's model using system of differential equations,
the decreasing rate of troops of a force is computed by attrition
rate of its rival force multiplied by the rival's number of troops.
In our model, attrition rate of $R$ is assumed to
be a linear function of the amount of $N$'s troop and this
supply unit can also be attrited by $B$. The resulting model is of non-linear Lanchester type. Let us recall that in classical non-linear Lanchester's model, the supply units have not been taken into account but only the armed forces.\\
In any battle of the type "one against many", the strategy will be intuitively derived as follows: focus all firepower to the entity possessing the "greatest threat". In order to quantify "threats" posed by the entities $R,\, N, \,A$ to $B$, we introduce the notion of "threatening rates" which are computed for each entity $R,\, N, \,A$ by involving entities number of troops, their attrition rates against $B$ and $B$'s attrition rates. 
The fire allocation of $B$ is assumed to be constant for a certain period of time, which we call "stage". The first stage is the period from the beginning of the battle to the instant when one entity is extirpated - its number of troops reaches zero. During this stage, the fire allocation is kept constant and the battle moves to a new stage if one (or more) entity is eliminated. This choice of fire allocation is meaningful since in planning phase of a battle, this choice simplifies the logistic operations. Moreover, capturing the states of the battlefield and altering the fire allocation accordingly is not an easy task. 
By using the threatening rates, we justify the intuitive strategy mentioned above. Thus, the optimal fire allocation for $B$ is focusing all its firepower to the entity possessing the greatest threatening rate. Several numerical examples are included to illustrate the theoretical findings. \\
The rest of the paper is organized as follows.  Section 2 is devoted to inroduce our model and to investigate the optimization problem for this model.
Numerical experiments are presented in Section 3 to
illustrate the theoretical results. Conclusion and some possible further developments are discussed in the last section.
\section{Main results}
\subsection{Non-linear Lanchester model of mixed NCW type}
Let us consider a battle where $B$ fights against $((R,N),A)$. Our model is called non-linear Lanchester of NCW type. NCW stands for "Network Centric Warfare", which is a novel notion of modern warfare. For a more detailed explanation of this notion, the reader is referred to \cite{Alb, Tun, Tun1, Owen}. A simple diagram for the model is represented in Figure \ref{hinh1}.
\begin{figure}[h]
\centering
\includegraphics[width=0.6\linewidth]{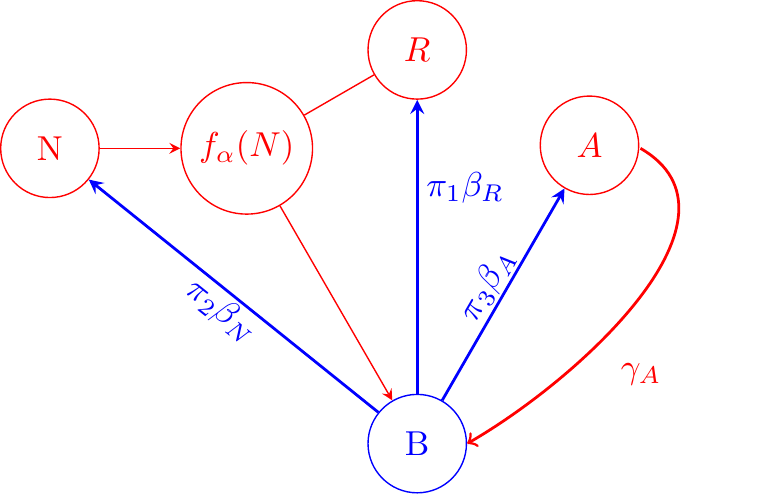}
\caption{Diagram of $\left( B \textit{ vs } \{(R,N), A\}\right)$ model.}
\label{hinh1}
\end{figure}
Before formulating the model, let us denote:
\begin{itemize}
\item by ${{\beta }_{R}}:$ attrition rate of $B$ against $R$.
\item by ${{\beta }_{A}}:$ attrition rate of $B$ against $A$.
\item by ${{\beta }_{N}}:$ attrition rate of $B$ against $N$.
\item ${{f}_{\alpha }}(N):$ attrition function of $N$ complementing $R^i$ to $B$.
 \item $\Pi =({{\pi }_{1}},{{\pi }_{2}},{{\pi }_{3}}):$ fire allocation of $B$ against $R$, $N$, $A$, respectively.
 \item $\alpha _{c}^{N}:$ fully-connected attrition rate of $R$ against $B$. 
 \item $\alpha _{d}^{N}:$ disconnected attrition rate of $R$ against $B$, $\left( \alpha _{d}^{N}\le \alpha _{c}^{N} \right).$
\item ${{\gamma }_{A}}:$ attrition rate of $A$ against $B$.
\item $R_0,\, N_0,\, A_0$ are the initial numbers of troops of $R,\, N,\, A$, respectively.
\end{itemize}
The fire allocation of $B$ is sought in the set 
\begin{equation}\label{Pi}
\mathcal{P}=\{\Pi =\left( \pi _{1},\pi _{2},\pi _{3} \right) | \pi_{1}+\pi_{2}+\pi _{3}=1,\, \pi _{i} \in \left[ 0;1 \right],\, \pi _i 's \text{ are constants },\, i=1,2,3\}.
\end{equation}  
The attrition function of $R$ against $B$ is assumed to be a linear function of $N$: 
\begin{equation} \label{lin_func}
{{f}_{\alpha }}\left( N \right)=\alpha _{d}^{N}+\left( \alpha _{c}^{N}-\alpha _{d}^{N} \right)\frac{N}{{{N}_{0}}},
\end{equation} 
 When $N={{N}_{0}}$, ${{f}_{\alpha }}\left( N \right)=\alpha _{c}^{N},$ in other words, $R$ and $N$ are fully connected. When $N$ is totally eliminated by $B,$  $N=0$, $R$ and $N$ are totally disconnected and ${{f}_{\alpha }}\left( N \right)=\alpha _{d}^{N}.$\\
Our model can be described by a system of differential equations defined as follows: 
	\begin{equation}\label{pt1}
	\left \{
 \begin{aligned}
	 \frac{dB}{dt}&=-\left[ \alpha _d^N+\left( \alpha _c^N-\alpha _d^N \right)\frac{N}{N_0} \right] R-\gamma _A A,  \\
   	\frac{dR}{dt}&=-\pi_1\beta_RB,  \\
  	 \frac{dN}{dt}&=-\pi_2\beta _NB,  \\
   	\frac{dA}{dt}&=-\pi_3\beta_AB.  
   	\end{aligned}
 \right.
   \end{equation}                                                  
\subsection{Optimal fire allocation}
For the model described by \eqref{pt1}, we consider the first stage of the conflict and study the problem of maximizing the remaining troops of $B$ at any instant $t$ with respect to fire allocations belonging to the set \eqref{Pi}. Let us introduce the notion of “threatening rates”, which are denoted by ${{b}_{1}},\,{{b}_{2}},\,{{b}_{3}}$, and computed as follows: 
\begin{equation}\label{rates}
	\left \{
 \begin{aligned}
{{b}_{1}}&=\alpha _{c}^{N}{{\beta }_{R}},\\
{{b}_{2}}&=\frac{{{\beta }_{N}}\left( \alpha _{c}^{N}-\alpha _{d}^{N} \right){{R}_{0}}}{{{N}_{0}}},\\
{{b}_{3}}&={{\gamma }_{A}}{{\beta }_{A}}.
\end{aligned}
 \right.
   \end{equation}      
These quantities represent the level of harm which $R,\,N,\,A$ can possibly inflict on $B$. By invoking these rates, the optimal fire allocation of $B$ is derived in the following theorem.
\begin{theorem}
Among all the fire allocations of the set \eqref{Pi} for the first stage, the optimal one for $B$  is as follows:
\begin{equation*}
	\Pi^*=\left \{
 \begin{aligned}
 \left( 1,0,0 \right) \text{  if }\,&\left( {{b}_{1}}\geq{{b}_{2}} \right)\wedge \left( {{b}_{1}}\geq{{b}_{3}} \right),  \\
   \left( 0,1,0 \right) \text{ if } \,&\left( {{b}_{2}}\geq{{b}_{1}}\geq{{b}_{3}} \right)\vee \left( {{b}_{2}}\geq{{b}_{3}}\geq{{b}_{1}} \right),  \\
   \left( 0,0,1 \right)\text{ if } \,&\left( {{b}_{3}}\geq{{b}_{1}}\geq{{b}_{2}} \right)\vee \left( {{b}_{3}}\geq{{b}_{2}}\geq{{b}_{1}} \right).  
\end{aligned}
 \right.
   \end{equation*}      
\end{theorem}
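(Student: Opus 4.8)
The plan is to collapse the four–dimensional system \eqref{pt1} into a single scalar second–order ODE and then run a comparison argument. Introduce the cumulative variable $u(t)=\int_0^t B(s)\,ds$, so that $u(0)=0$, $u'(0)=B_0$ and $B=u'$. Because the fire allocation is constant on the first stage, the last three equations of \eqref{pt1} integrate at once to $R=R_0-\pi_1\beta_R u$, $N=N_0-\pi_2\beta_N u$ and $A=A_0-\pi_3\beta_A u$. Substituting these into the first equation of \eqref{pt1} and expanding the product $N\cdot R$, I would obtain, after collecting powers of $u$, an equation of the form
\begin{equation*}
u''=-C+P\,u-Q\,u^2,
\end{equation*}
where $C=\alpha_c^N R_0+\gamma_A A_0$ is independent of $\Pi$, the linear coefficient is exactly the weighted threatening rate $P=\pi_1 b_1+\pi_2 b_2+\pi_3 b_3$, and the quadratic coefficient $Q=(\alpha_c^N-\alpha_d^N)\beta_R\beta_N\,\pi_1\pi_2/N_0\ge 0$. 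The point of this reduction is that maximizing the remaining troops $B(t)=u'(t)$ at every instant becomes a question about a one–dimensional system whose vector field is controlled by the two scalars $P$ and $Q$.

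Next I would identify the two–fold extremal role of the candidate allocation. Since $P$ is a convex combination of $b_1,b_2,b_3$, it never exceeds $b_{\max}:=\max\{b_1,b_2,b_3\}$, with equality precisely when all weight sits on a coordinate achieving the maximum. Each of the three allocations listed in the theorem is exactly such a vertex of $\mathcal P$ (the three cases exhaust the possibilities according to which $b_i$ is largest), and at every vertex at least one of $\pi_1,\pi_2$ vanishes, so $Q=0$. Writing $g^*(u)=-C+b_{\max}u$ for the right–hand side under the optimal allocation $\Pi^*$ and $g(u)=-C+Pu-Qu^2$ for an arbitrary $\Pi\in\mathcal P$, one gets the pointwise domination
\begin{equation*}
g^*(u)-g(u)=(b_{\max}-P)\,u+Q\,u^2\ge 0 \qquad (u\ge 0),
\end{equation*}
which holds because $u=\int_0^t B\ge 0$ throughout the stage, the troop count $B$ being nonnegative.

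The heart of the argument is then a comparison between the optimal trajectory $u^*$ and an arbitrary one $u$, both subject to $u(0)=0$, $u'(0)=B_0$. Setting $w=u^*-u$ I would use the linearity of $g^*$ to write
\begin{equation*}
w''=g^*(u^*)-g(u)=\big(g^*(u^*)-g^*(u)\big)+\big(g^*(u)-g(u)\big)=b_{\max}\,w+h,
\end{equation*}
where $h(t)=(b_{\max}-P)u(t)+Qu(t)^2\ge 0$ by the previous step, together with $w(0)=w'(0)=0$. Here $h$ is a known nonnegative forcing term (it depends only on the fixed arbitrary trajectory $u$), so solving this linear equation by Duhamel's formula gives, for $b_{\max}>0$ and $\omega=\sqrt{b_{\max}}$,
\begin{equation*}
w(t)=\frac{1}{\omega}\int_0^t \sinh\!\big(\omega(t-s)\big)\,h(s)\,ds,\qquad w'(t)=\int_0^t \cosh\!\big(\omega(t-s)\big)\,h(s)\,ds,
\end{equation*}
and since both kernels are nonnegative for $s\le t$ we conclude $w'(t)\ge 0$, i.e. $B^*(t)=(u^*)'(t)\ge u'(t)=B(t)$ for every $t$ in the first stage; the degenerate case $b_{\max}=0$ forces $h\equiv 0$ and is immediate. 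This establishes optimality of $\Pi^*$ in all three cases simultaneously.

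I expect the main obstacle to be the comparison step rather than the algebra: one must resist comparing the two vector fields "at the same time $t$", since the trajectories separate, and instead exploit the linearity of $g^*$ to convert the inequality $g^*\ge g$ into the solvable differential inequality $w''-b_{\max}w=h\ge 0$ with vanishing initial data. The nonnegativity of the Duhamel kernel $\sinh(\omega(t-s))/\omega$ is precisely what transports a pointwise inequality on the vector field to the desired pointwise inequality on the solutions. A secondary point worth emphasizing is that $\Pi^*$ simultaneously maximizes $P$ and minimizes $Q$ (to zero); it is this coincidence, special to the vertices of $\mathcal P$, that makes a single constant allocation optimal at every instant and not merely at $t=0$.
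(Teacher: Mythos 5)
Your proposal is correct, and while its reduction step coincides with the paper's (both integrate the last three equations of \eqref{pt1} against $u=X=\int_0^t B$, substitute into the first, and arrive at the same quadratic ODE $X''=-C_3+C_2X-C_1X^2$ with $C_2=\pi_1b_1+\pi_2b_2+\pi_3b_3$ and $C_1=a\pi_1\pi_2$), the optimality argument is genuinely different. The paper takes a first integral to get $B=\sqrt{-\tfrac23 C_1X^3+C_2X^2-2C_3X+C_4}$, asserts that smaller $C_1$ and larger $C_2$ make $B(t_0)$ larger while treating $X(t_0)$ as fixed, and then solves the static multi-objective problem $\min\{(C_1,-C_2)\}$ by the weighting method, checking five orderings of $b_1,b_2,b_3$ separately; you instead prove pointwise dominance of trajectories directly, observing that the vertex allocation simultaneously achieves $P=b_{\max}$ and $Q=0$, writing $w=u^*-u$ as the solution of $w''-b_{\max}w=h$ with $h\ge 0$ and $w(0)=w'(0)=0$, and concluding $B^*(t)\ge B(t)$ from the nonnegativity of the Duhamel kernels $\sinh(\omega(t-s))/\omega$ and $\cosh(\omega(t-s))$. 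What your route buys is rigor precisely where the paper is weakest: $X(t_0)$ itself depends on $\Pi$, so the paper's coefficientwise comparison at a common $X$ is heuristic, and the weighting method certifies only (weak) Pareto optimality of $(C_1,-C_2)$, which does not by itself yield $B^*(t)\ge B(t)$ at every instant; your comparison argument converts the vector-field inequality into the desired solution inequality and handles all orderings and ties ($\ge$) uniformly in one stroke. Two small points to tidy up: in the degenerate case $b_{\max}=0$ you should note explicitly that $b_1=\alpha_c^N\beta_R=0$ together with $0\le\alpha_d^N\le\alpha_c^N$ forces $a=0$, hence $Q=0$ and $h\equiv 0$ for every allocation (the claim is true but not immediate from $h=Qu^2$ alone); and the comparison should be stated on the common interval on which both trajectories remain in the first stage (all troop counts nonnegative), which is exactly the scope of the theorem, since the stage's terminal time itself depends on the allocation.
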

\begin{proof}
Let  $X\left( t \right)=\int\limits_{0}^{t}{B\left( s \right)ds\Rightarrow }{X}'\left( t \right)=B\left( t \right)$. We have then
\begin{equation}\label{pt2}
   {X}''\left( t \right)={B}'\left( t \right)=-\left[ \alpha _{d}^{N}+\left( \alpha _{c}^{N}-\alpha _{d}^{N} \right)\frac{N}{{{N}_{0}}} \right]R-{{\gamma }_{A}}A.
\end{equation}     
Integrating both sides of the second equation of \eqref{pt1} yields
$$\int\limits_{0}^{t}{dR=-\int\limits_{0}^{t}{{{\pi }_{1}}{{\beta }_{R}}B\left( s \right)ds}}\Rightarrow R\left( t \right)-R\left( 0 \right)=-{{\pi }_{1}}{{\beta }_{R}}X\left( t \right).$$
From this we obtain 
\begin{equation}\label{pt3}
R=-{{\pi }_{1}}{{\beta }_{R}}X\left( t \right)+{{R}_{0}}.
\end{equation}    
By analogous computations, we get 
\begin{eqnarray}
N&=&-{{\pi }_{2}}{{\beta }_{N}}X\left( t \right)+{{N}_{0}}, \label{pt4}    \\        
A&=&-{{\pi }_{3}}{{\beta }_{A}}X\left( t \right)+{{A}_{0}}.  \label{pt5}
\end{eqnarray}
where $R_0, N_0, A_0$ are the corresponding numbers of troops of $R, N, A$ at the beginning of the combat. 
Substituting $R,N,A$   in \eqref{pt3},\eqref{pt4},\eqref{pt5} into \eqref{pt2} yields:
 \begin{equation}\label{pt6}
{X}''\left( t \right)=-{{C}_{1}}{{X}^{2}}\left( t \right)+{{C}_{2}}X\left( t \right)-{{C}_{3}},
\end{equation}    
where:
\begin{eqnarray*}
{{C}_{1}}&=&\frac{{{\pi }_{1}}.{{\pi }_{2}}.{{\beta }_{R}}.{{\beta }_{N}}.\left( \alpha _{c}^{N}-\alpha _{d}^{N} \right)}{{{N}_{0}}},\\
{{C}_{2}}&=&\frac{{{\pi }_{2}}{{\beta }_{N}}\left( \alpha _{c}^{N}-\alpha _{d}^{N} \right){{R}_{0}}+{{\pi }_{1}}{{\beta }_{R}}\alpha _{c}^{N}{{N}_{0}}}{{{N}_{0}}}+{{\gamma }_{A}}{{\pi }_{3}}{{\beta }_{A}},\\
 {{C}_{3}}&=&\alpha _{c}^{N}{{R}_{0}}+{{\gamma }_{A}}{{A}_{0}}.
\end{eqnarray*}
Multiplying both sides of \eqref{pt6} by $d{X}'\left( t \right)$ and integrating, we obtain
\begin{eqnarray*}
{X}'\left( t \right)&=&B\left( t \right)  \\
                         &=&\sqrt{-\frac{2}{3}{{C}_{1}}{{X}^{3}}\left( t \right)+{{C}_{2}}{{X}^{2}}\left( t \right)-2{{C}_{3}}X\left( t \right)+{{C}_{4}}}.
\end{eqnarray*}
  It can be seen that ${{C}_{1}},{{C}_{2}}$ are nonnegative and they depend solely on the fire allocation $\Pi$, $C_3$ is independent of $\Pi$ and ${{C}_{4}}$ is an integral constant. In the stage under consideration, at any time $t=t_0$, $X(t_0)$ is a positive constant. Therefore, it is observed that the smaller $C_1$ and the greater $C_2$ make $B(t_0)$ greater. We then need to solve the following multi-objective optimization problem:
$$
\min \limits _{\Pi \ in \mathcal{P}} \{(C_1; -C_2)\}.
$$
Let us denote
\begin{eqnarray*}
a&=&\frac{{{\beta }_{R}}{{\beta }_{N}}\left( \alpha _{c}^{N}-\alpha _{d}^{N} \right)}{{{N}_{0}}},\\
 {{b}_{1}}&=&\alpha _{c}^{N}{{\beta }_{R}},\\
 {{b}_{2}}&=&\frac{{{\beta }_{N}}\left( \alpha _{c}^{N}-\alpha _{d}^{N} \right){{R}_{0}}}{{{N}_{0}}},\\
 {{b}_{3}}&=&{{\gamma }_{A}}{{\beta }_{A}},\\
 x&=&\pi_1,y=\pi_2, z=\pi_3.
 \end{eqnarray*}
 The problem now becomes: 
\begin{equation}\label{pt7}
\min \{\left( axy, -(b_1x+b_2y+b_3z)\right) \} \text{  s.t. }\left\{ \begin{aligned}
  & 0\le x,y,z\le 1, \\ 
 & x+y+z=1. \\ 
\end{aligned} \right.
\end{equation}
To tackle this problem, we will use weighting method (see \cite[Section 3.1]{Kai1998}). We thus set 
$$F_\lambda\left( x,y,z \right)=\lambda \left( axy \right)-\left( 1-\lambda  \right)\left( {{b}_{1}}x+{{b}_{2}}y+{{b}_{3}}z \right),$$
and obtain:
\begin{equation}\label{pt8}
\min F_\lambda(x,y,z) \text{  s.t. }\left\{ \begin{aligned}
  & 0\le x,y,z\le 1, \\ 
 & x+y+z=1, \\ 
 &0\le \lambda \le 1.
\end{aligned} \right.
\end{equation}
Substituting $x=1-y-z$, we now get the problem: 
\begin{equation}
\label{pt9}
\begin{aligned}
&\min \{\lambda ( 1-y-z )ay-( 1-\lambda  )( {{b}_{1}}+( {{b}_{2}}-{{b}_{1}} )y+( {{b}_{3}}-{{b}_{1}} )z )  \}\\
&\text{s.t. } \left\{ \begin{aligned}
  &  y,z \ge 0, \\ 
 & y+z\le 1, \\ 
 &0\le \lambda \le 1.
\end{aligned} \right.
\end{aligned}
\end{equation}
Let us consider the following five cases:
\begin{enumerate}
\item If $b_2>b_3 >b_1,$ since $\lambda \left( 1-y-z \right)ay\ge 0,$ it follows: 
\begin{eqnarray*}
   \min F_\lambda &\ge & -\left( 1-\lambda  \right)\left( {{b}_{1}}+\left( {{b}_{2}}-{{b}_{1}} \right)y+\left( {{b}_{3}}-{{b}_{1}} \right)z \right)  \\
   &\ge& -\left( 1-\lambda  \right)\left( {{b}_{1}}+\left( {{b}_{2}}-{{b}_{1}} \right)\left( y+z \right) \right)  \\
   &\ge & -\left( 1-\lambda  \right){{b}_{2}}=F_\lambda\left( 0,1,0 \right).  
\end{eqnarray*}
\item If ${{b}_{2}}>{{b}_{1}}>{{b}_{3}},$ the problem becomes:
\[\min\left\{ \lambda \left( 1-y-z \right)ay+\left( 1-\lambda  \right)\left( {{b}_{1}}-{{b}_{3}} \right)z-\left( 1-\lambda  \right)\left( {{b}_{1}}+\left( {{b}_{2}}-{{b}_{1}} \right)y \right) \right\}. \]
Therefore: 
$$\min F_\lambda\ge -\left( 1-\lambda  \right){{b}_{2}}=F_\lambda\left( 0,1,0 \right).$$
\item  If ${{b}_{3}}>{{b}_{2}}>{{b}_{1}}$
we obtain:
\begin{eqnarray*}
   \min F_\lambda&\ge& -\left( 1-\lambda  \right)\left( {{b}_{1}}+\left( {{b}_{3}}-{{b}_{1}} \right)\left( y+z \right) \right)  \\
   &\ge& -\left( 1-\lambda  \right){{b}_{3}}=F_\lambda \left( 0,0,1 \right).  
\end{eqnarray*}
\item If ${{b}_{3}}>{{b}_{1}}>{{b}_{2}}$
the problem is now:
\[ \min \left\{ \lambda \left( 1-y-z \right)ay+\left( 1-\lambda  \right)\left( {{b}_{1}}-{{b}_{2}} \right)y-\left( 1-\lambda  \right)\left( b+\left( {{b}_{3}}-b \right)z \right) \right\}.  
\]
We yield that: 
$$\min F_\lambda \ge -\left( 1-\lambda  \right){{b}_{3}}=F_\lambda\left( 0,0,1 \right).$$
\item If $b_2<b_1,\, b_3<b_1$, the problem turns out to be:
$$\min \left\{ \lambda \left( 1-y-z \right)ay+\left( 1-\lambda  \right)\left( \left( {{b}_{1}}-{{b}_{2}} \right)y+\left( {{b}_{1}}-{{b}_{3}} \right)z \right)-\left( 1-\lambda  \right){{b}_{1}} \right\}.$$
And, it follows that $\min F_\lambda \ge -\left( 1-\lambda  \right)b=F_\lambda\left( 1,0,0 \right). $
\end{enumerate}
\end{proof}
In principle, the battle can be divided into three stages. In the first stage, due to the computed threatening rates ${{b}_{1}},\,{{b}_{2}},\,{{b}_{3}}$ the Blue force $B$ will focus all its firepower to one of the entities $R,N,A.$ When one of the entities is eliminated, the second stage begins. When one of the remaining two forces is extinguisged, the third stage follows. However, if $R$ and $A$ are out of the picture by the end of the second stage, $B$ will be no longer attrited and the battle finishes. 
As stated above, for the first stage, our results apply. For the second stage, one may use the results by Kim et. al. \cite{Kim2017} or by Lin and Mackay \cite{Mac}. 
Thus, by the first case in our proof, in order that $B\left( t \right)$ is always maximal, for the first stage $B$ should focus its firepower to $N$. After the conclusion of $N$, the second stage begins, where the results of Lin and Mackay apply, thus $B$ concentrates its firepower on $R$ or $A$.

The second case in our proof is explained analogously. 

By the third case in our proof, $B$ will focus its firepower on $A$. The second stage in this case has been considered in \cite{Kim2017}. 
The fourth case is analyzed similarly as the third case.
Strategy in the fifth case can be explained as follows: for the first stage, $B$ concentrates on $R.$ $B$ then turn its firepower to $A$ for the second stage. And the battle ends when $A$ is totally annihilated. 
\section{Numerical experiments}
\subsection{Case 1} Let us consider equation \eqref{pt1} with coefficients given by
\begin{table}[!h]
\centering
\begin{tabular}{|c|c|c|c|c|c| }
\hline
$\alpha_c^N$ & $\alpha_d^N$&$\gamma_A$&$\beta_R$&$\beta_N$&$\beta_A$
\\
\hline
$0.4$&$0.15$&$0.2$&$0.5$&$0.3$&$0.2$\\
\hline
\end{tabular}
\caption{Parameters for Case 1.}
\end{table}\\
together with the following initial conditions : ${{B}_{0}}=170;{{R}_{0}}=120;\,\,{{N}_{0}}=20;\,{{A}_{0}}=50$. Threatening rates are thus computed as: ${{b}_{1}}=0.2;\,\,\,\,{{b}_{2}}=0.45;\,\,\,\,{{b}_{3}}=0.04.$ It is obvious that ${{b}_{2}}>{{b}_{1}}>{{b}_{3}}$, so the optimal fire allocation for the first stage is given by  $\left( 0,1,0 \right)$. 
By using results of Lin and Mackay, the optimal strategy for the whole battle is given by: 
\[{{\Pi }^{*}}=\left( 0,1,0 \right)\to \left( 1,0,0 \right)\to \left( 0,0,1 \right).\] 
This strategy should be interpreted as follows: for the first stage, $B$ will focus all its firepower to $N$. For the second stage, $B$ will concentrate on $R$ since $b_1 >b_2$. 
In order to contrast with the optimal strategy, we use ${{\Pi }_{1}}=\left( 1,0,0 \right) \to \left( 0,0,1 \right).$ This strategy is explained as: $B$ will focus all its firepower on $R$ in the first stage; for the second stage, it will concentrate on $A.$ The simulation results show that $B$ still win the battle with this strategy. However its amount of troops at any instant is always smaller than itself using the optimal one. Amounts of troops of $B$ using both stragtegies are represented in Figure \ref{th1}.
\begin{figure}[!h]
\centering\includegraphics[width=1.0\linewidth]{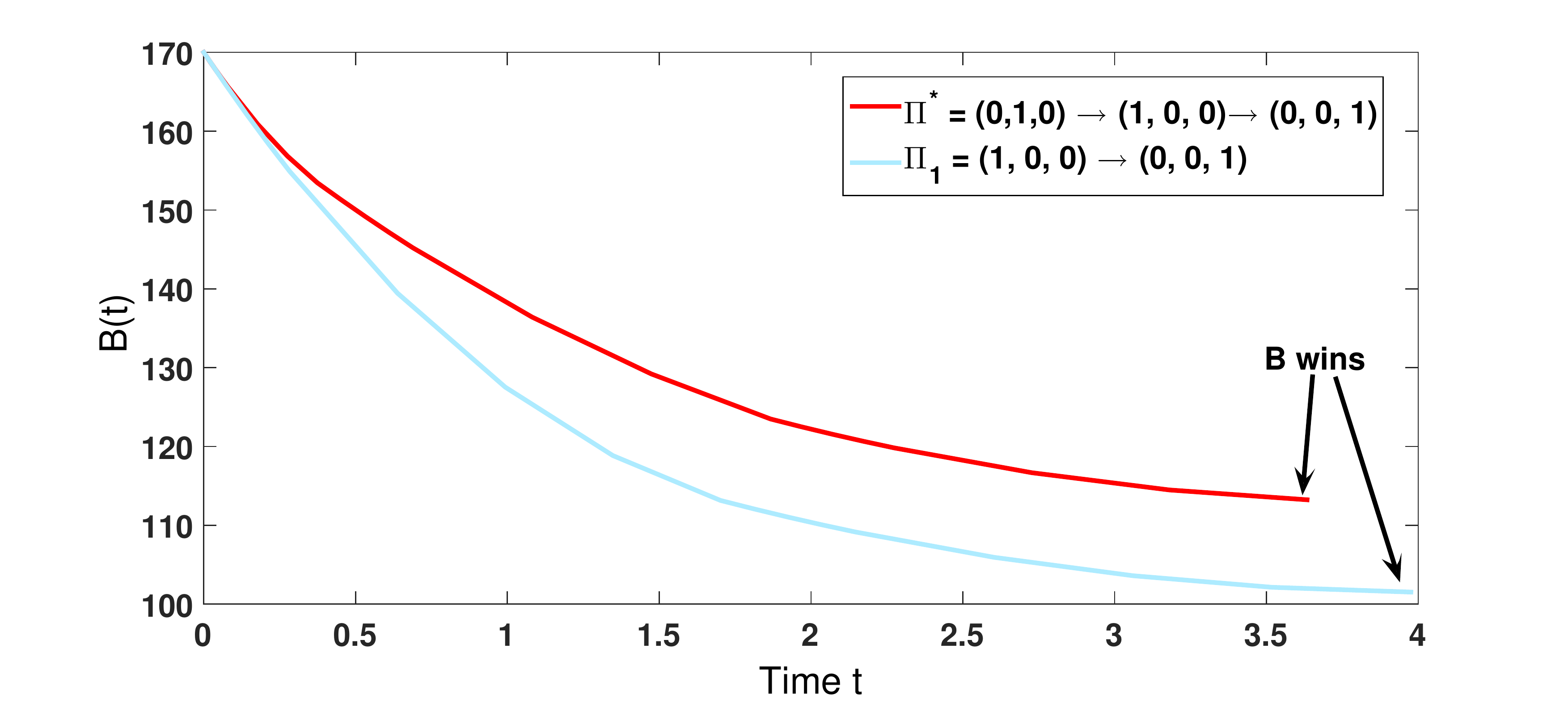}
\caption{Results for Case 1.} 
\label{th1}
\end{figure}

\subsection{Case 2}
We now will investigate model \eqref{pt1} with the following parameters 
\begin{table}[!h]
\centering
\begin{tabular}{|c|c|c|c|c|c| }
\hline
$\alpha_c^N$ & $\alpha_d^N$&$\gamma_A$&$\beta_R$&$\beta_N$&$\beta_A$
\\
\hline
$0.4$&$0.15$&$0.2$&$0.5$&$0.2$&$0.2$\\
\hline
\end{tabular}
\caption{Parameters for Case 2}
\end{table}\\
together with these initial conditions: ${{B}_{0}}=170,\,{{R}_{0}}=120,\,{{N}_{0}}=50,\,{{A}_{0}}=50.$
Threatening rates are thus computed as: ${{b}_{1}}=0.2;\,\,\,\,{{b}_{2}}=0.12;\,\,\,\,{{b}_{2}}=0.04.$ Since ${{b}_{2}}>{{b}_{1}}>{{b}_{3}}$, the optimal fire allocation for the first stage is ${{\Pi }^{*}}=\left( 1,0,0 \right).$ Among three strategies we chose to compare, only ${{\Pi }_{3}}=\left( 0.7,0.2,0.1 \right)\to \left( 0,0,1 \right)$ leads to $B$'s victory. However, the number of troops of $B$ is again lower than one resulting from the optimal strategy. The other two strategies even result in $B$'s failure. Processes and endings of the simulated battles are depicted in Figure \ref{th2}
\begin{figure}[!h]
\centering\includegraphics[width=1.0\linewidth]{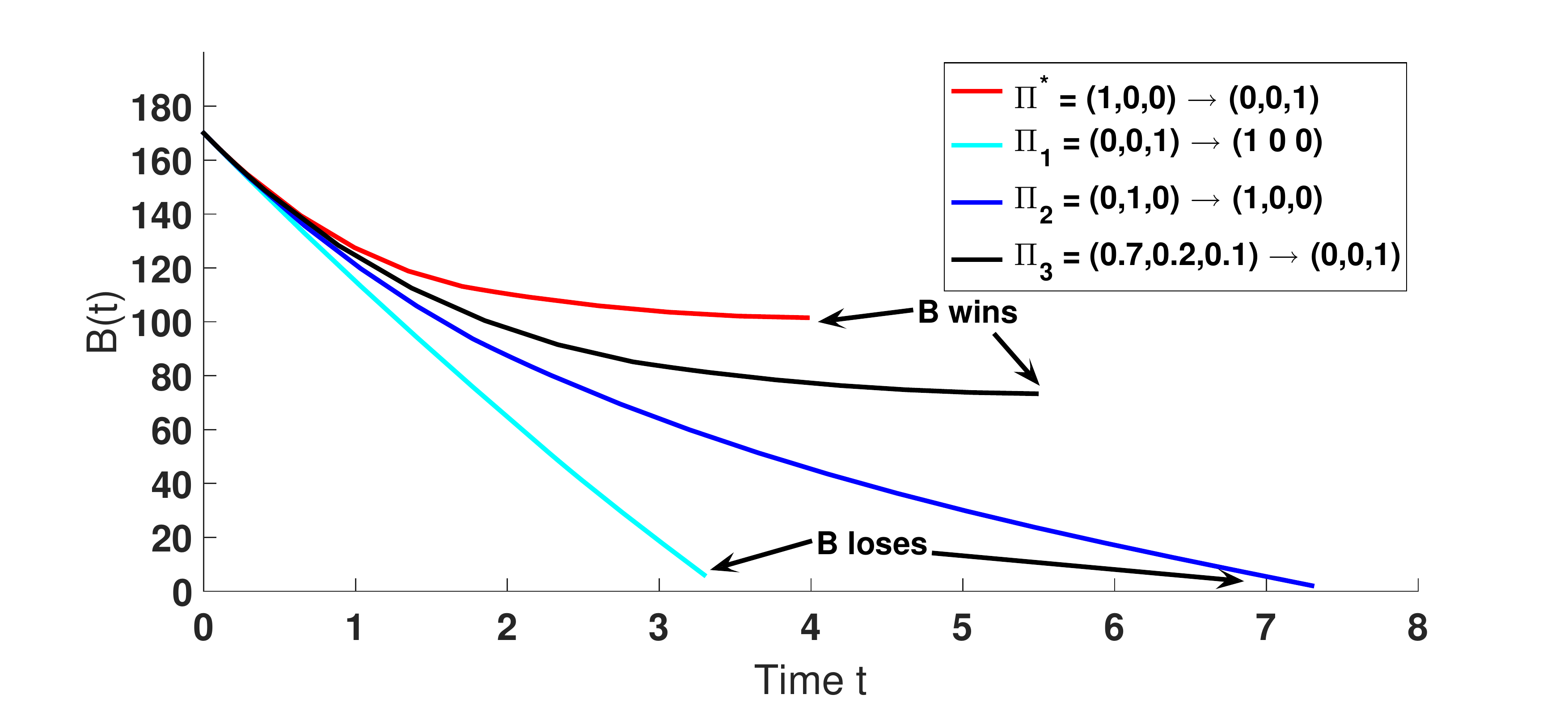}
\caption{Results for Case 2.}
\label{th2}
\end{figure}

\subsection{Case 3}
Now we consider the model with parameters given below:
\begin{table}[!h]
\centering
\begin{tabular}{|c|c|c|c|c|c| }
\hline
$\alpha_c^N$ & $\alpha_d^N$&$\gamma_A$&$\beta_R$&$\beta_N$&$\beta_A$
\\
\hline
$0.4$&$0.2$&$0.6$&$0.5$&$0.2$&$0.5$\\
\hline
\end{tabular}
\caption{Parameters for Case 3}
\end{table}\\
together with initial conditions: ${{B}_{0}}=170;{{R}_{0}}=120;\,\,{{N}_{0}}=60;\,{{A}_{0}}=50.$ The threatening rates are ${{b}_{1}}=0.2;\,{{b}_{2}}=0.08;\,{{b}_{3}}=0.3$. The optimal fire allocation is therefore ${{\Pi }^{*}}=\left( 0,0,1 \right)\to \left( 1,0,0 \right).$ The two contrasting strategies are ${{\Pi }_{1}}=\left( 1,0,0 \right)\to \left( 0,0,1 \right);\,{{\Pi }_{2}}=\left( 0,1,0 \right)$. By ${{\Pi }_{1}}$, $B$ win with more troops lost while ${{\Pi }_{2}}$ leads to $B$'s failure. Processes of the battles are given in Figure \ref{th3}.
\begin{figure}[!h] 
\centering\includegraphics[width=1.0\linewidth]{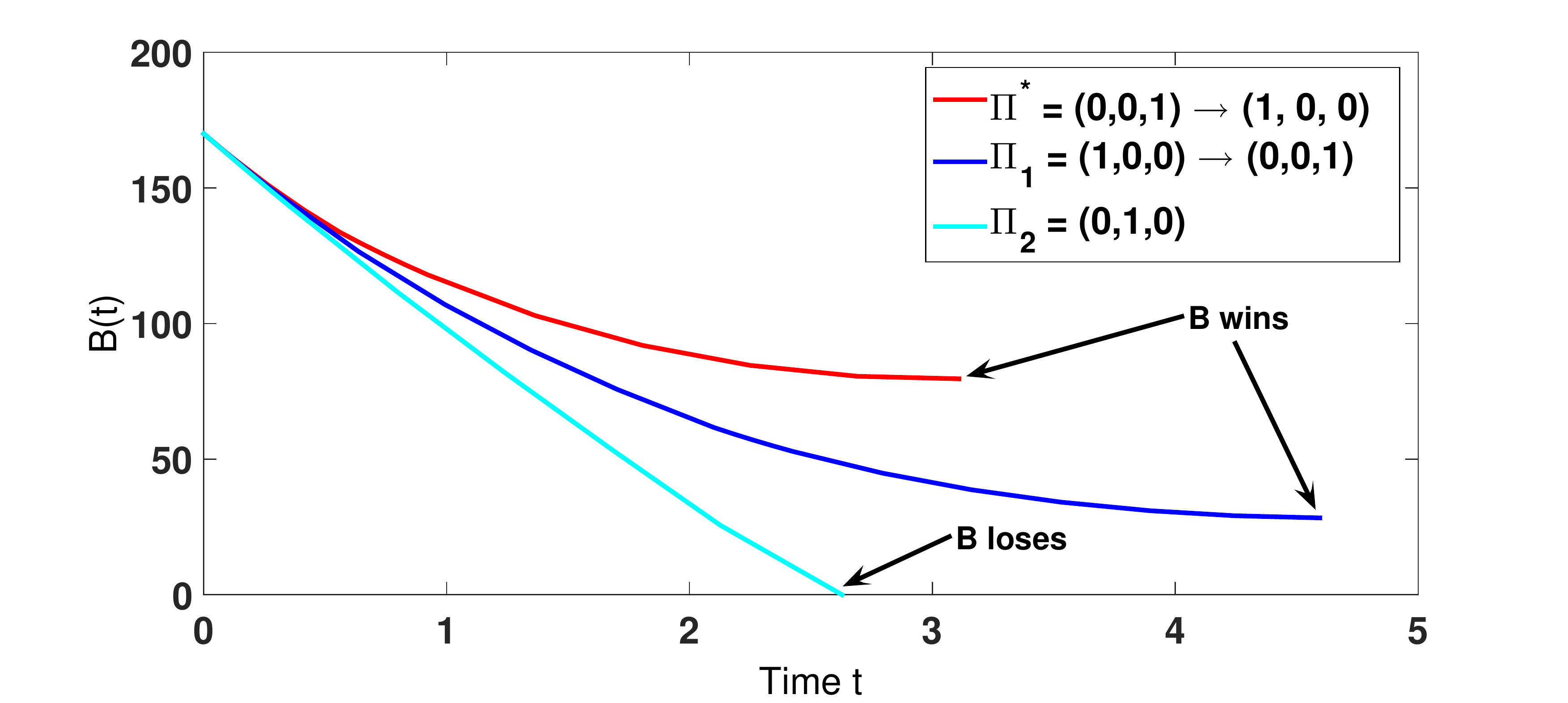}
\caption{Results for Case 3.}
\label{th3}
\end{figure}

 \section{Conclusion}
 A nonlinear Lanchester model of mixed NCW type has been introduced together with the notion of \textit{threatening rates}. Invoking these rates, threats exposed by rivals have been quantified and these quantities help derive the optimal strategy for Blue force, which decreases the losing of its own troops. Numerical results support the theoretical findings.

\enddocument